\documentclass[12pt]{article}
\usepackage{verbatim}
\usepackage[margin=1.5in,vmargin=1.5in]{geometry}
\usepackage[utf8x]{inputenc}
\usepackage[T1]{fontenc}
\usepackage{lmodern}
\usepackage[english]{babel}
\usepackage[draft=false,kerning=true]{microtype}
\usepackage{ifthen}
\usepackage{amsmath, amsthm, amssymb}
\usepackage{graphicx}
\usepackage{tikz}
\usepackage{fancyhdr}
\pagestyle{fancy}
\fancyhead{}
\fancyfoot{}
\usepackage{hyperref}

\numberwithin{equation}{section}

\newcommand{\dd}{\mathrm{d}}

\newcommand\matA{{\bf A}}
\newcommand\matM{{\bf M}}

\newcommand\vecu{{\bf u}}
\newcommand\vecX{{\bf X}}
\DeclareMathOperator\Exp{Exp}

\DeclareMathOperator\Gam{Gamma}
\DeclareMathOperator\Bin{Bin}

\newcommand\Biggiven{\, \Big \vert \, }
\newcommand\convD{{\buildrel {\mathcal D} \over \longrightarrow}}

\newcommand{\R}{\mathbb{R}}
\newcommand{\Z}{\mathbb{Z}}

\newcommand{\parentheses}[4][]%
{\mathopen{}\ifthenelse{\equal{#1}{}}{\left#2}{\csname#1\endcsname#2}%
    {#4}\mathclose{}\ifthenelse{\equal{#1}{}}{\right#3}{\csname#1\endcsname#3}}
\newcommand{\foperator}[1]{\ensuremath{%
    \mathop{{#1}\thinspace\negthinspace}
    \mathchoice{\negthinspace}{\negthinspace}{}{}}}
\newcommand{\f}[3][]{\ensuremath{\foperator{#2}\parentheses[#1]{(}{)}{#3}}}

\newcommand{\Oh}[2][]{\ensuremath{O\parentheses[#1]{(}{)}{#2}}}

\DeclareMathOperator\E{\mathbb{E}}
\DeclareMathOperator\Prob{\mathbb{P}}
\DeclareMathOperator\V{\mathbb{V}ar}
\DeclareMathOperator\Cov{\mathbb{C}ov}
\newcommand{\fE}[2][]{\E\parentheses[#1]{[}{]}{#2}}
\newcommand{\fP}[2][]{\Prob\parentheses[#1]{(}{)}{#2}}
\newcommand{\fV}[2][]{\V\parentheses[#1]{(}{)}{#2}}
\newcommand{\fCov}[2][]{\Cov\parentheses[#1]{(}{)}{#2}}
\newcommand{\fexp}[2][]{\exp\parentheses[#1]{(}{)}{#2}}

\theoremstyle{plain}
\newtheorem{theorem}{Theorem}[section]
\newtheorem{lemma}[theorem]{Lemma}

\theoremstyle{remark}
\newtheorem{remark}[theorem]{Remark}

\setcounter{secnumdepth}{2}

\newcommand\polya{P\'olya}

\newcommand{\TODO}[1]%
{\par\fbox{\begin{minipage}{0.9\linewidth}\textbf{TODO:} #1\end{minipage}}\par}

\begin{document}

\title{The continuum \polya-like random walk}
\fancyhead[CO]{\footnotesize The continuum \polya-like random walk}
\author{Daniel Krenn\thanks{Daniel~Krenn is supported by the
    Austrian Science Fund (FWF): P\,24644-N26.},
  Hosam\@ Mahmoud, and Mark Daniel Ward\thanks{Mark Daniel Ward is
    supported by NSF Grant DMS-1246818, and by the NSF Science \&
    Technology Center for Science of Information Grant CCF-0939370.}}
\fancyhead[CE]{\footnotesize Daniel Krenn, Hosam\@ Mahmoud, and Mark Daniel Ward}
\fancyhead[LE, RO]{\footnotesize \thepage}
\date{}

\maketitle

\begin{abstract}
The \polya\ urn scheme is a discrete-time process 
concerning the addition and removal of colored balls. 
There is a known embedding of it in continuous-time called the
\polya\ process.
We deal with a generalization of this stochastic model,
where the initial values and the entries of the transition
matrix (corresponding to additions or removals) are not necessarily
fixed integer values as in the standard \polya\ process. In one of
the scenarios, we even allow the entries of the matrix to be random
variables.
As a result, we no longer have a combinatorial model of ``balls in an
urn,'' but a broader interpretation as a random walk
in a possibly high number of dimensions.
In this paper, we study several parametric classes of
these generalized continuum \polya-like random walks.
\end{abstract}
 
\bigskip
{\footnotesize
\noindent
{\bf AMS classification:} 
              60F05, 
              60G99. 

\medskip
\noindent
{\bf Keywords and phrases:} Urn model,
P\'olya process,
random walk,
stochastic process,
partial differential equation.
}

\bigskip
\section{Introduction}
\label{Sec:intro}

We
deal with a generalization of the \polya\ process on $c$ colors
by embedding each element of the replacement
matrix into~$\R$, or by even treating the elements of the replacement
matrix as random variables.
We are no longer restricted to thinking about integer-valued
quantities (such as \emph{counts} of 
balls in an urn),
we can now handle real-valued quantities, such as a random walk with
possibly fractional step sizes, in $c$ dimensions, rather than
restricting to integer step sizes in each of the $c$ dimensions.
We can view the
replacement matrix itself as random.
The generalized model 
is mentioned
in~\cite{Janson:2004:functional-limit-thms-branching,
  Pouyanne:2008:algebraic-approach-polya-processes}.

\subsection{Background}
We first review the standard \polya\ process.  The \polya\ urn scheme is a 
process underlying an urn
that evolves in discrete
time. The urn contains balls of up to $c$ colors. 
The colors are numbered, say the set of colors is
$C=\{1,\dots,c\}$. 
At each discrete epoch in time, a ball is sampled from
the urn.  It is then put back in the urn, together with
a number of other balls in various colors. If the sampled ball
has color $i\in C$, then we add to the urn
$A_{i,j}$ balls of color $j$, for $j\in C$. 
If $A_{i,j}$ is negative,
we remove $A_{i,j}$ balls of color $j$.
It is customary to represent these dynamics by a replacement matrix
$$\matA = \begin{pmatrix} A_{1,1} &A_{1,2} &\ldots &A_{1,c}\cr
                   A_{2,1} &A_{2,2}& \ldots &A_{2,c}\cr
                   \vdots &\vdots &\ddots &\vdots\cr
                   A_{c,1} &A_{c,2} &\ldots &A_{c,c}\end{pmatrix}.$$
In the standard \polya\ process each element of $\matA$ is an integer.
(We will deal with an extended view that considers $\matA$ as having 
real-valued entries in 
Section~\ref{generalizedpolyaprocess}.)
It is usually assumed that the urn is ``tenable''
in the sense that the selection of balls can be continued ad
infinitum, no matter which stochastic path is followed, i.e., the
process will never get stuck.

The \polya\ process is an embedding of the \polya\ urn scheme in real
time. Embedding in real time (poissonization) was suggested by
Kac~\cite{Kac:1949:deviations} as a general methodology for
understanding discrete probability problems. In the context of urns,
poissonization was introduced by Athreya and Karlin~\cite{Athreya-Karlin:1968:urn-continuous} to understand discrete-time 
\polya\ urn schemes. 
Poissonization was thus meant as a transform.  
The inverse transform (depoissonization)---to translate results in the continuous domain
back to results in the discrete domain---is fraught with difficulty~\cite{Athreya-Karlin:1968:urn-continuous}. 
For general background on depoissonization,
we refer the reader to~\cite{Hofri-Mahmoud:2019:algo-nonuniformity} (Chapter~8),
or~\cite{Szpankowski:2001:average-acse-alg-sequences} (Chapter~10). 

Some authors developed interest in the continuous-time \polya\ process for its own 
sake (see \cite{Balaju-Mahmoud:2006:limit-diag,
  Balaju-Mahmoud-Watanabe:2006:ehrenfest,
  Chen-Mahmoud:2016:time-continuous-polya,
  Sparks-Mahmoud:2013:two-color-polya}). 
In the \polya\ process, each ball carries an internal clock that rings
in $\Exp(1)$ time (a random amount of time, according to an exponential random variable with mean~1), independently of the behavior of all other clocks. Whenever a clock rings, it is instantaneously reset to ring again in  
$\Exp(1)$ time (independently of the clocks on all the
other balls). 
In other words, each ball has the ability to generate a new Poisson
process with intensity~1.  
When the clock associated with a ball of color $i$ rings,
the addition and
removal of balls corresponds to picking a colored ball from a
\polya\ urn and using the $i$th row of the replacement matrix $\matA$
to determine which balls to add or remove. 
All replacements are assumed to occur 
instantaneously, and each new ball is given an independent clock that
rings in $\Exp(1)$ time.
\subsection{Generalized \polya\ process}\label{generalizedpolyaprocess}
In this paper, we study a generalized \polya\ process that can be viewed as a second layer of embedding of the \polya\ urn scheme.
The starting numbers and the (possibly random) numbers added are 
in~$\R$ (no longer restricted to~$\Z$),
so the numbers are no longer counts of balls. A natural (more
general) interpretation is a random walk in $c$ dimensions.  At time
$t$, the current position of the walk in~$\R^c$ is a $c$-dimensional column vector
$\vecX (t) := (X_1(t),\ldots,X_c(t))^\top$.

At any point $t$ in time, the next renewal occurs after 
a random amount of time, according to a master clock.
The waiting
time of this master clock follows an exponential random variable
with mean~$1/\sum_{i\in C} X_i(t)$.
Given that a renewal occurs, the probability that the renewal
corresponds to color $i$ is $X_{i}(t)/\sum_{j\in C}X_{j}(t)$.
In other words, the probability of a renewal of type~$i$ is
proportional to the amount of quantity~$i$ present when the
transition occurs.  In such a case, the $i$th row of the matrix
$\matA$ dictates the $c$-dimensional direction in which to move, i.e.,
we add $A_{i,j}$ units to $X_j(t)$, for each $j\in C$.
Thus, it is appropriate then to call $\matA$ 
the \emph{navigation matrix}, instead of the replacement matrix. 

To avoid trivialities, we only consider starting values~$X_j(0)$
and matrices~$A_{i,j}$ in which the walk is tenable, i.e., the walk
always avoids the origin, and each of the coordinates is always
nonnegative.  We call the row vector $\vecX^\top (t) =
(X_1(t),\ldots,X_c(t))$ a continuum \polya-like process or random walk.

This stochastic process is not a Poisson process,
because the rate of the process itself is random, i.e., the rate of
replacement is not simply a function of time.  The current rate of
replacement depends on the number of replacements that have taken
place beforehand.

\subsection{Organization of the paper}

The paper is organized as follows. 
Section~\ref{sec:model} specifies the probability model for
the stochastic process.  In Section~\ref{Sec:pde},
we derive a fundamental partial differential equation that governs
the behavior of the generalized \polya\ process. 
In Section~\ref{Sec:ave}, we derive a functional equation for
the moment generating function of the position of the random walk at
time $t$. Section~\ref{sec:balanced-triangular} is perhaps the most 
novel part of the paper, because we solve the partial differential
equations for a balanced upper-triangular case, 
a case that proved difficult in the usual urn setting.

Additionally, in Appendix~\ref{sec:examples}, we show how 
several classical probability models can be generalized with this
approach.  In all of these classical cases, the 
partial differential equations from Section~\ref{Sec:ave} can be
solved.

\section{The probabilistic model}
\label{sec:model}

In order to be able to establish the partial differential equations
of Section~\ref{Sec:pde}, we need to precisely describe 
the number of renewals, say $N(t,\Delta t)$, that occur
in the processes during the interval $(t, t+\Delta t]$.
We use the notation ${\mathcal C}_i$ to indicate the event that exactly one renewal occurs 
in the interval $(t, t+\Delta t]$ and that renewal is induced by 
 the $i$th coordinate.
To study the behavior of $\vecX (t+s)$, for $s > 0$, we condition on
the vector $\vecX (t)$.
\begin{lemma}\label{lem:P:events}
The conditional probabilities of either zero renewals, one renewal of 
type~$i$, or two or more renewals, in the interval $(t, t+\Delta t]$,
given the value of~$\vecX(t)$, are (respectively) the following:
  \begin{align*}
    \fP[big]{N(t,\Delta t) = 0\ |\  \vecX (t)} &=
    \fexp[Big]{-\Delta t \sum_{j\in C}  X_j(t)},\\
    \fP[big]{\text{$(N(t,\Delta t) = 1) \cap {\mathcal C}_i$} \ |\  \vecX (t)} &=
    \Delta t\, X_i(t) \fexp[Big]{-\Delta t \sum_{j\in C}  X_j(t)} +
    \Oh{(\Delta t)^{2}},\\
    \fP[big]{N(t,\Delta t) \geq 2\ |\  \vecX (t)} &= 
    \Oh{(\Delta t)^{2}},
  \end{align*}
  as $\Delta t \to 0$.
\end{lemma}

\begin{proof}
Given the values $ \vecX (t)$, the master clock does not
ring during the interval $(t,t+\Delta t]$ with probability
\begin{equation*}
\fP[big]{N(t,\Delta t) = 0\ |\  \vecX (t)} =
\prod_{j\in C} \biggl(\frac {(\Delta t)^0 e^{-\Delta t}} {0!}\biggr)^{X_j(t)}
= \fexp[Big]{-\Delta t \sum_{j\in C}  X_j(t)}.
\end{equation*}

For fixed $t$, once we are given the values $ \vecX (t)$, the next ring of the
master clock, after time $t$, occurs at a random time $x$ (with $t < x$) with probability density function
$$\Big(\sum_{j\in C}  X_j(t)\Big)\fexp[Big]{-(x-t) \sum_{j\in C}  X_j(t)}.$$
When such a clock ring occurs at time $x$, it is a ring of type~$i$
with probability $X_{i}(t)/\sum_{j\in C}X_{j}(t)$.
Then, for $t < x \leq t + \Delta t$, there are no additional subsequent
rings before time $t + \Delta t$ with probability
\begin{multline*}
\prod_{j\in C} \biggl(\frac {(t+\Delta t-x)^0 e^{-(t+\Delta t-x)}} {0!}\biggr)^{X_j(t)+A_{i,j}} \\
= \fexp[Big]{-(t+\Delta t-x) \sum_{j\in C}  \bigl(X_j(t) + A_{i,j}\bigr)}.
\end{multline*}

Putting all of this together
(and again given $\vecX(t)$),
the conditional probability 
$\fP[big]{(N(t,\Delta t) = 1) \cap \mathcal C_i\ |\  \vecX (t)}$
that the master clock rings exactly once 
during the interval $(t,t+\Delta t]$
and yields a renewal of type~$i$,
is equal to
\begin{align*}
\int_{t}^{t+\Delta t}&
\Big(\sum_{j\in C}  X_j(t)\Big)\fexp[Big]{-(x-t) \sum_{j\in C}  X_j(t)}
\bigg(\frac{X_{i}(t)}{\sum_{j\in C}X_{j}(t)}\bigg) \\
&\times\fexp[Big]{-(t+\Delta t-x) \sum_{j\in C}  \bigl(X_j(t) + A_{i,j}\bigr)}
\, \dd x.
\end{align*}
This simplifies to
\begin{align*}
&\int_{t}^{t+\Delta t}
X_{i}(t)\,\fexp[Big]{
-\Delta t\sum_{j\in C}  X_j(t)-(t+\Delta t-x) \sum_{j\in C}A_{i,j}}\!\ \dd x\\
&\qquad{}= \frac{ X_{i}(t)\,\fexp[Big]{-\Delta t\sum_{j\in C} X_j(t)}\Bigl(
\Delta t\sum_{j\in C}A_{i,j}
+ \Oh{(\Delta t)^{2}}
\Bigr) }{\sum_{j\in C}A_{i,j}}\\
&\qquad{}= \Delta t\,X_{i}(t)\fexp[Big]{-\Delta t\sum_{j\in C} X_j(t)}
+ \Oh[big]{(\Delta t)^{2}}.
\end{align*}
Finally, the conditional probability (given $\vecX(t)$) that the master clock rings two
or more times during the interval $(t,t+\Delta t]$
is 
\begin{align*}
\fP{N(t,\Delta t) \geq 2\ |\  \vecX (t)} 
&= 1 
    - \fP{N(t,\Delta t) = 0\ |\  \vecX (t)} \\
       &\hspace*{1.85em}{} - \sum_{i\in C}\fP{(N(t,\Delta t) = 1) \cap {\mathcal C}_i\ |\  \vecX (t)}\\
&= 1 - \fexp[Big]{-\Delta t \sum_{j\in C}  X_j(t)}\\
        &\hspace*{1.85em}{} - \sum_{i \in C}\Delta t\, X_i(t) \fexp[Big]{-\Delta t \sum_{j\in C}  X_j(t)}
+ \Oh[big]{(\Delta t)^{2}}\\
&= \Oh[big]{(\Delta t)^{2}} ;
\end{align*}
 we arrive at the latter conclusion after a local expansion of the two exponential
 functions.
This completes the proof of Lemma~\ref{lem:P:events}.
\end{proof}

\section{The fundamental partial differential equation}
\label{Sec:pde}

We formulate here a partial differential equation for the continuum
\polya-like random walk. We use the vector $\vecu = (u_j)_{j\in C}$ to mark
the colors $C=\{1,\dots,c\}$.  Let
\begin{equation*}
  \phi(t, \vecu) = \fE[Big]{\fexp[Big]{\sum_{j\in C} u_j X_j(t)}}
\end{equation*}
be the joint moment generating function
of the coordinates of the random walk~$\vecX(t)$.  For $i\in C$, let
\begin{equation*}
  \psi_i(\vecu) = \fE[Big]{\fexp[Big]{\sum_{j\in C} u_j A_{i, j}}}
\end{equation*}
be the joint moment generating function of the random variables
on row $i$ of the navigation matrix~$\matA$.

\begin{theorem}
  \label{Theo:pde}
  The joint moment generating function $\phi(t, \vecu)$ satisfies
\begin{equation*}
  \frac{\partial \phi}{\partial t}  + \sum_{i\in C} (1 - \psi_i)
  \frac{\partial \phi}{\partial u_i} = 0.
\end{equation*}
\end{theorem}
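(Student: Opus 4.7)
The plan is to derive the PDE by a standard infinitesimal-generator argument: express $\phi(t+\Delta t, \vecu)$ via a one-step conditioning on what happens in $(t, t+\Delta t]$, use Lemma~\ref{lem:P:events} to classify the three possibilities (no renewal, exactly one renewal of some type $i$, or at least two renewals), and then expand to first order in $\Delta t$.

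First I would write
\begin{equation*}
  \phi(t+\Delta t, \vecu) = \fE[big]{\fE[big]{\fexp[big]{\textstyle\sum_j u_j X_j(t+\Delta t)} \given \vecX(t)}}
\end{equation*}
and split the inner conditional expectation into the three events from Lemma~\ref{lem:P:events}. On $\{N(t,\Delta t)=0\}$ the walk has not moved, so the contribution is $\fexp{\sum_j u_j X_j(t)} \fexp{-\Delta t \sum_j X_j(t)}$. On $(N(t,\Delta t)=1)\cap \mathcal C_i$, conditional on a type-$i$ renewal, $X_j$ increases by $A_{i,j}$; since the row $(A_{i,j})_{j\in C}$ is independent of $\vecX(t)$, taking expectation over it produces the factor $\psi_i(\vecu)$, and the probability factor from Lemma~\ref{lem:P:events} contributes $\Delta t\, X_i(t)\fexp{-\Delta t \sum_j X_j(t)} + \Oh{(\Delta t)^2}$. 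The event $\{N(t,\Delta t)\ge 2\}$ contributes $\Oh{(\Delta t)^2}$ once we use that the moment generating function of the position after at most two jumps is bounded in a neighborhood of $\vecu$ (for $\vecu$ in the domain of convergence of~$\phi$).

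Next I would expand $\fexp{-\Delta t \sum_j X_j(t)} = 1 - \Delta t \sum_j X_j(t) + \Oh{(\Delta t)^2}$ inside the outer expectation and use the identity
\begin{equation*}
  \fE[big]{X_i(t)\fexp[big]{\textstyle\sum_j u_j X_j(t)}} = \frac{\partial \phi}{\partial u_i}(t,\vecu),
\end{equation*}
which follows from differentiating under the expectation (again legitimate inside the domain of convergence). Collecting terms yields
\begin{equation*}
  \phi(t+\Delta t, \vecu) - \phi(t,\vecu)
  = -\Delta t \sum_{i\in C} \frac{\partial \phi}{\partial u_i}
  + \Delta t \sum_{i\in C} \psi_i(\vecu)\,\frac{\partial \phi}{\partial u_i}
  + \Oh[big]{(\Delta t)^{2}}.
\end{equation*}
Dividing by $\Delta t$ and letting $\Delta t\to 0$ delivers the claimed PDE.

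The routine parts are the algebraic expansion and the bookkeeping of $\Oh{(\Delta t)^2}$ terms; the only real subtlety is justifying (i) the interchange of expectation and differentiation that identifies $\E[X_i \exp(\sum u_j X_j)]$ with $\partial\phi/\partial u_i$, and (ii) the independence used to factor out $\psi_i(\vecu)$ in the type-$i$ term, i.e.\ that the row $(A_{i,j})_j$ drawn at the renewal is independent of $\vecX(t)$. Both are standard given the model description in Section~\ref{sec:model}, and together with Lemma~\ref{lem:P:events} they close the argument.
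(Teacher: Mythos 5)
Your proposal is correct and follows essentially the same route as the paper: condition on $\vecX(t)$ and on the three renewal events of Lemma~\ref{lem:P:events}, factor out $\psi_i(\vecu)$ by the independence of the row $(A_{i,j})_j$ from $\vecX(t)$, identify $\fE[big]{X_i(t)\fexp[big]{\sum_j u_j X_j(t)}}$ with $\partial\phi/\partial u_i$, and pass to the limit $\Delta t\to 0$. Your explicit flagging of the two interchange/independence justifications is a small bonus over the paper's presentation, which leaves them implicit.
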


\begin{proof}
  We use conditional expectation to calculate
  \begin{equation*}
    E := \fE[Big]{\fexp[Big]{\sum_{j\in C} u_j X_j(t+\Delta t)} \Biggiven \vecX (t)},
  \end{equation*}
  i.e., the expectation conditioned on the status~$\vecX (t)$
  at time~$t$. We do this by first conditioning on whether there are
  zero, one or
  at least two renewals in the interval $(t, t+\Delta t]$.
  If there is exactly
  one renewal, we also condition on the color of the chosen %
  direction.
  The probabilities of these events are calculated in
  Lemma~\ref{lem:P:events}. As in Section~\ref{sec:model}, let
  $N(t,\Delta t)$ denote the number of renewals that occur in the
  processes in the interval $(t, t+\Delta t]$.

  We obtain
  \begin{align*}
    E
    &= \fE[Big]{\fexp[Big]{\sum_{j\in C} u_j X_j(t+\Delta t)}
      \Biggiven \vecX (t) \text{ and } N(t,\Delta t)  = 0} \times \fP[big]{N(t,\Delta t) = 0}\\
    &\phantom{=}\hphantom{0} +
    \sum_{i\in C}
   \fE[Big]{\fexp[Big]{\sum_{j\in C} u_j X_j(t+\Delta t)}
      \Biggiven \vecX (t) \text{ and } (N(t,\Delta t) = 1) \cap \mathcal C_i} \\
         &\qquad \qquad {} \times
    \fP[big]{N(t,\Delta t) = 1,\, \mathcal C_i}\\
     &\phantom{=}\hphantom{0}+
     \fE[Big]{\fexp[Big]{\sum_{j\in C} u_j X_j(t+\Delta t)}
      \Biggiven \vecX (t) \text{ and } N(t,\Delta t)  \ge 2} \times \fP[big]{N(t,\Delta t) \ge 2}.
   \end{align*}
  In the last equality, we have allowed for the possibility that the
  $A_{i,j}$ themselves (i.e., the entries of the navigation matrix)
  may be random variables, allowing for more generality.  
  We only assume that these matrix entries are
  independent of the current state of the process $\vecX(t)$.

  Collecting all these facts
  and utilizing Lemma~\ref{lem:P:events}, we obtain
  \begin{align*}
    E &= \fexp[Big]{\sum_{j\in C} u_j X_j(t)} 
    \fexp[Big]{-\Delta t \sum_{i\in C}  X_i(t)}\\
    &\phantom{=}\hphantom{0} +
    \sum_{i\in C} 
    \fE[Big]{\fexp[Big]{ \sum_{j\in C} u_jA_{i,j}} }
    \fexp[Big]{\sum_{j\in C} u_j X_j(t)}\Delta t\, X_i(t)\\
    &\qquad \qquad {} \times \fexp[Big]{-\Delta t \sum_{j\in C}  X_j(t)} +
    \Oh{(\Delta t)^{2}}.
  \end{align*}
  A local expansion of the exponentials gives
  \begin{align*}
    E &= \fexp[Big]{\sum_{j\in C} u_j X_j(t)}
    \Bigl(1 -\Delta t \sum_{i\in C}  X_i(t) + \Oh{(\Delta t)^2} \Bigr)\\
    &\phantom{=}\hphantom{0} +
    \Delta t \bigl(1 + \Oh{\Delta t}\bigr)
    \fexp[Big]{\sum_{j\in C} u_j X_j(t)} 
    \sum_{i\in C} X_i(t)
    \fE[Big]{\fexp[Big]{ \sum_{j\in C} u_jA_{i,j}} }\\
    &\phantom{=}\hphantom{0} +
    \Oh{(\Delta t)^{2}}.
  \end{align*}
 
  Taking expectations over $\vecX(t)$ yields
  \begin{align*}
    \phi(t + \Delta t, \vecu)
    = \phi(t, \vecu)
    &- \Delta t \sum_{i\in C} \fE[Big]{X_i(t)
    \fexp[Big]{\sum_{j\in C} u_j X_j(t)}}\\
    &{}+
    \Delta t \bigl(1 + \Oh{\Delta t}\bigr)
    \fE[Big]{\fexp[Big]{\sum_{j\in C} u_j X_j(t)} 
    \sum_{i\in C} X_i(t)} \psi_i(\vecu)\\
    &{}+
    \Oh{(\Delta t)^{2}}.
\end{align*}
We can now write the limiting form
\begin{align*}
  \frac{\partial \phi(t, \vecu)}{\partial t}
  &= \lim_{\Delta t \to 0} \frac{\phi(t + \Delta t, \vecu) 
    - \phi(t, \vecu)} {\Delta t} \\
  &= - \sum_{i\in C} \fE[Big]{  X_i(t) 
  \fexp[Big]{\sum_{j\in C} u_j X_j(t)} }
  \bigl( 1 - \psi_i(\vecu)\bigr) \\
  &= - \sum_{i\in C} \frac {\partial \phi(t, \vecu)} {\partial u_i}
 \bigl (1- \psi_i(\vecu)\bigr).
\end{align*}
\end{proof}
\begin{remark}
  The proof of Theorem~\ref{Theo:pde} is a
  generalization of the proof in Balaji and Mahmoud~\cite{Balaju-Mahmoud:2006:limit-diag}. However,
  it needed some new techniques. In~\cite{Balaju-Mahmoud:2006:limit-diag}, there is a
  conditional argument that uses a sum on the number of balls of a
  color, given that number. Of course, the number of balls is an
  integer and such a sum can be carried out. 
  Here, the counterpart of
  a number of balls of a certain color is a continuous coordinate, 
  and such a conditional sum cannot be written.
\end{remark}
\section{Moments}
\label{Sec:ave}
We derive functional equations for the moments.  For example, we
derive a functional
equation for the mean position (see Theorem~\ref{Theo:polyaprocave} at
the end of this section) by differentiating 
(with respect to~$u_j$) on both sides of the partial differential
equation in Theorem~\ref{Theo:pde}.  This yields
\begin{equation*}
  \frac{\partial}{\partial u_j}
  \mathopen{}\left(\frac{\partial \phi}{\partial t}\right)
  + \sum_{i\in C} \bigl(1 - \psi_i\bigr)
  \frac{\partial^2 \phi}{\partial u_j\, \partial u_i}
  - \sum_{i\in C} \frac{\partial \psi_i}{\partial u_j} \times
  \frac{\partial \phi}{\partial u_i} = 0.
\end{equation*}
Evaluation of the summands at $u_i=0$, for all $i\in C$, yields
\begin{align*}
   \frac{\partial}{\partial u_j}
  \mathopen{}\left(\frac{\partial \phi}{\partial t}\right)  \Bigg|_{\vecu=0}
  &=\frac{\partial}{\partial t}
  \mathopen{}\left(\frac{\partial \phi}{\partial u_j}\right)
  \Bigg|_{\vecu=0} \\
  &= \frac{\partial}{\partial t}
  \fE[Big]{X_j(t) \fexp[Big]{\sum_{i\in C} u_i X_i(t)}} \Bigg|_{\vecu=0}\\
  &= \frac{\dd}{\dd t} \fE{X_j(t)}.      
\end{align*}
We also have
\begin{equation*}
  \sum_{i\in C} \bigl(1 - \psi_i\bigr)
  \frac{\partial^2 \phi}{\partial u_j\, \partial u_i}\Big| _{\vecu=0}= 0,
\end{equation*}
because $(1-\psi_i)\big| _{\vecu=0}= 0$,
and
\begin{equation*}
  \sum_{i\in C}
  \frac{\partial \psi_i}{\partial u_j} \times
  \frac{\partial \phi}{\partial u_i}  \Big| _{\vecu=0}
  = \sum_{i\in C} \fE{A_{i,j}} \fE{X_i(t)}.
\end{equation*}
Hence, for the $j$th coordinate, we get the ordinary differential equation  
\begin{equation*}
  \frac{\dd}{\dd t} \fE{X_j(t)} =
  \sum_{i\in C}  \fE{A_{i,j}} \fE{X_i(t)}.
\end{equation*}
Putting the differential equations (for $j\in C$)
together in matrix form, we get the functional equation
\begin{equation*}
  \frac{\dd}{\dd t} \fE[big]{\vecX(t)} = \fE[big]{\matA^\top} \fE[big]{\vecX (t)},
\end{equation*}
where $\matA^\top$ is the transpose of the navigation
matrix~$\matA$. This first-order functional equation has a standard
solution; this yields the following theorem.

\begin{theorem}
\label{Theo:polyaprocave}
Let $\matA$ be the navigation matrix of a continuum \polya-like random walk.
At time $t$, the expected value of the coordinates of the walk are
\begin{equation*}
  \fE{\vecX(t)} = e^{\fE[empty]{\matA^\top}\, t}\,  \vecX(0).
\end{equation*}
\end{theorem}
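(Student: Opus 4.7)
The plan is to recognize the derivation immediately preceding the theorem as already producing the linear ODE system
\begin{equation*}
\frac{\dd}{\dd t}\fE{\vecX(t)} = \fE[big]{\matA^T}\,\fE{\vecX(t)},
\end{equation*}
subject to the deterministic initial condition $\fE{\vecX(0)} = \vecX(0)$, and then to solve this ODE in closed form via the matrix exponential.

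First I would observe that $\fE[big]{\matA^T}$ is a constant $c \times c$ matrix (the randomness of the navigation matrix has been integrated out, and no $t$-dependence remains), so what we have is a homogeneous, constant-coefficient, first-order linear system in $\R^c$. Standard ODE theory then says the unique solution takes the form $\vecF(t) := e^{\fE[empty]{\matA^T}\, t}\,\vecX(0)$. I would verify this candidate directly by termwise differentiation of the series $e^{Mt} = \sum_{k\ge 0} (Mt)^k / k!$, which yields $\frac{\dd}{\dd t} e^{Mt} = M\,e^{Mt}$; hence $\vecF$ satisfies the ODE, and $\vecF(0) = \vecX(0)$ matches the initial data. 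Uniqueness, via Picard--Lindel\"of (applicable because the right-hand side is linear, hence Lipschitz in the state), forces $\fE{\vecX(t)} = \vecF(t)$.

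The main obstacle, to the extent there is one, lies not in solving the ODE but in justifying the manipulations performed just before the theorem: that $\fE{X_j(t)}$ is finite and continuously differentiable in $t$, and that differentiating under the expectation sign at $\vecu = 0$ is legitimate. Both require a moment bound on $\vecX(t)$, which follows from the tenability of the walk together with the small-$\Delta t$ estimates of Lemma~\ref{lem:P:events}. Once this regularity is granted, the remainder of the proof reduces to a one-line appeal to the matrix exponential, and the statement follows.
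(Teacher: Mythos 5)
Your proposal matches the paper's approach: the derivation preceding the theorem produces the linear constant-coefficient system $\frac{\dd}{\dd t}\fE{\vecX(t)} = \fE[big]{\matA^T}\,\fE{\vecX(t)}$, and the paper likewise resolves it by the standard matrix-exponential solution with initial condition $\vecX(0)$. Your added remarks on uniqueness and on justifying differentiation under the expectation are sensible refinements, but the route is essentially identical.
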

Note that, for a matrix $\matM$,
we have used the notation
$e^{\matM} = \sum_{n=0}^\infty {\matM^n}\!/ n!$,
and we can compute this by using the Jordan form of $\matM$.
\begin{remark}
  The result of Theorem~\ref{Theo:polyaprocave} can be obtained via an
  alternative martingale approach. It is discussed
  in~\cite{Janson:2004:functional-limit-thms-branching}
  that $e^{-t \E[A^\top]}\vecX(t)$ is martingale. It follows that
  $\E[e^{-t \E[A^\top]}\vecX(t)] = \vecX(0)$, or in other words,
  $\E[\vecX(t)] = e^{t \E[A^\top]}\vecX(0)$.  However, it is harder to
  obtain higher moments by this technique. On the other hand,
  martingale convergence theorems can give us (nonconstructively)
  almost sure limits, without specifying what the limits are. The PDE
  method being discussed here works hand in hand with that alternative
  line to give us the distributions.
\end{remark}

\def\moveexamples{
\subsection{\polya--Eggenberger-like random walk}
\label{sec:polya-eggenberger}

Suppose the navigation
matrix~$\matA$ is in the diagonal matrix form
\begin{equation*}
  \begin{pmatrix} 
    A_1&0&  \cdots &0\\
    0 &A_2& \cdots &0\\
    \vdots &\vdots & \ddots &\vdots\\
    0 &0 & \cdots &A_c
  \end{pmatrix},
\end{equation*}
where $A_i$ is a nonnegative random variable. Let us focus on the
$i$th component; we set $u=u_i$ and $u_j=0$ for $j\neq i$.
Let $\psi_i(u) = \fE[empty]{e^{u A_i}}$ be the moment generating function of $A_i$, and
let $\phi_i(t,u) = \fE[empty]{e^{u X_i(t)}}$ be the moment generating
function of $X_i(t)$.
The partial differential equation simplifies to
\begin{equation}\label{eq:pde:polya-egg}
  \frac{\partial \phi_i(t, u)}{\partial t}  + 
  (1 - \psi_i (u) ) \frac{\partial \phi_i(t, u)}{\partial u}
  = 0.
\end{equation}
This equation can be solved for several standard distributions of the
$A_i$. Two examples are discussed below.

\subsubsection{Component-wise almost surely constant}

Suppose $A_i = \alpha_i \in \R^+$ almost surely. This
gives rise to
\begin{equation*}
  \phi_i(t, u) =
  (1 - e^{\alpha_i t} (1-e^{-\alpha_i u}))^{-X_i(0)/\alpha_i}.
\end{equation*}
We set $u = s e^{-\alpha_i t}$ and take the limit, 
to obtain
\begin{equation*}
\lim_{t\to\infty}  \fE[Big]{\exp \Bigl(\frac {sX_i(t)} {e^{\alpha_i t}}\Bigr)}= \lim_{t\to\infty} \phi_i\Bigl(t, \frac s {e^{\alpha_i t}}\Bigr) 
       = (1 - \alpha_i s)^{-X_i(0)/\alpha_i}.
     \end{equation*}
The latter moment generating function is that of a $\Gam(X_i(0)/\alpha_i, \alpha_i)$ random
variable. That is, we have
\begin{equation*}
\frac {X_i(t)} {e^{\alpha_i t}} \ \convD \ \Gam\Bigr(\frac {X_i(0)} {\alpha_i}, \alpha_i\Bigl).
\end{equation*}
Note that the displacements along the $i$th coordinate affect only changes in that direction.
In other words, the limit multivariate distribution has independent marginals, with the $i$th marginal having the latter gamma limit distribution.
Also observe that this random walk has a very long memory. It never forgets where it starts.
Even the limit is influenced by the initial position vector, which comes in as a parameter
in the joint limit distribution.  

\subsubsection{Component-wise exponential distribution}

We illustrate with another instance, \emph{in which the navigation
  matrix itself has random elements}. Suppose 
the \polya--Eggenberger-like random walk operates under exponentially 
distributed displacements. That is, the $A_i$ (for $i\in C$) are
independent $\Exp(1)$ random variables.
Thus, for $u < 1$, we have $\psi_i(u)=1/(1-u)$ and we can
solve~\eqref{eq:pde:polya-egg}. 
In this case, we have
\begin{equation*}
\phi_i(t, u) = e^{-\f[empty]{W}{-u e^{t-u}}},
\end{equation*}
where $\f{W}{\,\cdot\,}$ is Lambert's $W$~function (defined implicitly as any complex solution of
$z = \f{W}{z}e^{\f{W}{z}}$). Note that $T(z) := -\f{W}{-z}$ is called the \emph{tree function} and appears 
in the enumeration of trees. 

Set $u = s e^{- t}$, and take the limit
\begin{equation*}
  \lim_{t\to\infty}  \fE[Big]{\exp \Bigl(\frac {sX_i(t)} {e^{t}}\Bigr)}
  = \lim_{t\to\infty} \phi_i\Bigl(t, \frac s {e^{t}}\Bigr) 
  = \frac {T(s)} s.
     \end{equation*}
The right-hand side in the latter equation is 
the moment generating function of a Lambert random variable $W^*$. Thus, in the limit
we have
 $$e^{-t}\begin{pmatrix} 
                   X_1(t)\\
                   X_2(t)\\
                   \vdots\\
                   X_c(t)
  \end{pmatrix} \convD \begin{pmatrix} 
                   W_1^*\\
                   W_2^*\\
                   \vdots\\
                   W_c^*
  \end{pmatrix},$$ 
and the components~$W_i^*$ of the limiting vector are independent
Lambert random variables, each of which is distributed like $W^*$.

\subsection{Ehrenfest-like random walk}
\label{sec:ehrenfest}

In this example we take the Ehrenfest navigation matrix
$$\matA = \begin{pmatrix} 
                   -\gamma&\gamma\\
                    \gamma&-\gamma
   \end{pmatrix}$$
for some positive $\gamma$. This Ehrenfest-like random walk is  in two dimensions.
Let us call the two coordinates of the walk $X(t)$ and $Y(t)$, i.e.,
in the previously used notation, $\vecX(t) = (X(t), Y(t))^\top$.
Note that the two vectors for the choice of movement are
in opposite directions and aligned along the $45$-degree line
\begin{equation}\label{invarianteq}
  X(t) + Y(t) = \lambda.
\end{equation}
for some intercept $\lambda$.
Thus, the movement is constrained to a linear subspace.  For
this walk to be tenable, both $X(0)/\gamma$ and $Y(0)/\gamma$ have to
be nonnegative integers (or alternatively, $\lambda/\gamma$ and
$X(0)/\lambda$ must be a positive integer).
With $u$ and $v$ being variables of the moment generating function,
the equation to solve is
$$\frac {\partial \phi(t; u, v)} {\partial t}  + (1 - e^{-\gamma u + \gamma v})
         \frac {\partial \phi(t; u, v)} {\partial u} + (1 - e^{\gamma u - \gamma v})
         \frac {\partial \phi(t; u, v)} {\partial v}  = 0.$$

Set $v= 0$ and $\eta(t, u) := \phi(t;u,0)$, and note that 
$\eta(t,u) = \fE[empty]{e^{uX(t)}}$ is the moment generating function of $X(t)$.
We can rewrite the partial differential equation as
\begin{equation*}
  \frac{\partial \eta(t,u)}{\partial t}  + (1 - e^{-\gamma u })
  \frac{\partial \eta(t,u)}{\partial u} + (1 - e^{\gamma u })
  \fE{Y(t)e^{uX(t)}}= 0.
\end{equation*}
Using the invariant in equation~(\ref{invarianteq}), we write the latter equation as
\begin{equation*}
\frac {\partial \eta(t,u)} {\partial t}  + (1 - e^{-\gamma u })
         \frac {\partial \eta(t,u)} {\partial u} + (1 - e^{\gamma u })
        \,  \fE{(\lambda - X(t))e^{uX(t)}}= 0.
      \end{equation*}
We thus have the simplified equation
\begin{equation*}
\frac {\partial \eta(t,u)} {\partial t}  + (e^{\gamma u} - e^{-\gamma u })
         \frac {\partial \eta(t,u)} {\partial u} +
         \lambda (1-e^{\gamma u}) \,\eta(t,u)= 0.
       \end{equation*}
This equation has the solution
\begin{equation*}
  \eta(t,u) = \Bigl( \frac{1 + e^{-2\gamma t+\gamma u}+ e^{\gamma u} - e^{-2\gamma t}}
  {1 - e^{-2\gamma t+\gamma u}+ e^{\gamma u} + e^{-2\gamma t}}\Bigr)^{X(0)/\gamma}
  \Bigl( \frac{1 - e^{-2\gamma t+\gamma u} +   e^{\gamma u} + e^{-2\gamma t}}
  {2}\Bigr)^{\lambda/\gamma}.
\end{equation*}
As $t\to\infty$ we have the limit
$$\lim_{t\to\infty }\eta(t,u) =    \Bigl( \frac{1 +  e^{\gamma u} }2\Bigr)^{\lambda/\gamma} . $$
Recall that the tenability requires that $\lambda/\gamma$ is a positive integer. Therefore,
the limit of the moment generating function is that of $\Bin(\lambda/\gamma, 1/2)$,
namely, a binomial random variable that counts the number of successes in
$\lambda/\gamma$ independent, 
identically distributed trials, with rate of success $1/2$ per trial.

Unlike the \polya--Eggenberger-like random walk,
the Ehrenfest-like random walk is not much affected by where it starts in
the first quadrant of the $X$$Y$-plane.
At any $t$,
the exact distribution does have $\vecX(0)$ in it; however, its influence
is attenuated exponentially fast in time, and in the limit it is completely
obliterated.
\subsection{Walking along a 45-degree hill}

For $\gamma\in \R^+$, the navigation matrix
\begin{equation*}
\matA = 
\begin{pmatrix}-\gamma  & -\gamma\\
               \gamma  & \gamma\end{pmatrix}
\end{equation*}
takes a walk along an oblique line, like climbing a 45-degree hill.
That walk will remain tenable so long as $Y(0) > X(0) $. 
Even if the navigator
walks all the way down to the bottom of the hill (hitting the $Y$-axis at a positive point),
the navigator will come back up along the 45-degree line with probability~1, staying in the first quadrant.
In this walk, we always add or subtract increments in the two dimensions
that are 
in the same amount; the difference $Y(t) - X(t) = Y(0) - X(0) = \lambda > 0$
remains the same at all times.  

As with the Ehrenfest-like random walk of the previous section,
specializing the partial differential equation of
Theorem~\ref{Theo:pde} to this walk along a $45$-degree hill, we again use
$v=0$ and $\eta(t, u) := \phi(t;u,0)$.  We note that 
\begin{equation*}
  \frac{\partial \eta(t,u)}{\partial t} 
  + (1 - e^{- \gamma u}) \frac{\partial \eta(t,u)}{\partial u}
  + (1 - e^{\gamma u})
  \Big(\frac{\partial \eta(t,u)}{\partial u} +\lambda  \eta(t,u)\Big) = 0.
\end{equation*}
Rearranging, we get
\begin{equation*}
  \frac{\partial \eta(t,u)} {\partial t}
  + (2 - e^{\gamma u} - e^{- \gamma u})\,
  \frac{\partial \eta(t,u)} {\partial u}
  + \lambda (1 - e^{\gamma u}) \eta(t,u)
  = 0.
\end{equation*}
This differential equation has the solution
\begin{align}\label{Eq:exactsol} 
  \eta(t,u) &= \frac {1}{(1 - \gamma t (e^{\gamma u}-1))^{\lambda/\gamma}}
  \Big( \frac{e^{\gamma u} - \gamma t (e^{\gamma u} - 1)}
  {1 - \gamma t (e^{\gamma u}-1)}\Big)^{X(0)/\gamma} \\
  &= \frac{(e^{\gamma u} - \gamma t (e^{\gamma u} - 1) )^{X(0)/\gamma}}
    {(1 - \gamma t (e^{\gamma u}-1))^{Y(0)/\gamma}} \notag
\end{align}

From the exact moment generating function we
get the exact mean and variance for the coordinates by taking derivatives at $u=0$; these quantities are
\begin{align*}
   \fE{X(t)} &= \lambda\gamma t+X(0),\\
   \fE{Y(t)} &= \lambda\gamma t+Y(0),\\
   \fV{X(t)} &= \fV{Y(t)}
   = \lambda \gamma^3 t^2 + (2 X(0) + \lambda)\gamma^2 t.
\end{align*} 
Next, we put $u = s/t$ in~(\ref{Eq:exactsol}) and use
the local expansion
\begin{equation*}
  e^{\gamma u} =
  e^{\gamma s/t} =
  1 + \frac {\gamma s} t +\Oh{\frac 1 {t^2}}.
\end{equation*}
We obtain
\begin{equation*}
  \fE{e^{s X(t) / t}}
  = \frac{1}{(1 - \gamma^2 s + \Oh{1/t})^{\lambda/\gamma}}
  \left(\frac{1 - \gamma^2 s + \Oh{1/t}}
    {1 - \gamma^2 s + \Oh{1/t}}\right)^{X(0)/\gamma}
\end{equation*}
which implies
\begin{equation*}
  \lim_{t\to\infty} \fE{e^{s X(t) / t}}
  = \frac{1}{(1 - \gamma^2 s)^{\lambda/\gamma}}.
\end{equation*}
The limiting moment generating function is that of a $\Gam(\lambda /
\gamma, \gamma^2)$ random variable, i.e., $X(t)/t$ converges in
distribution to  a $\Gam(\lambda / \gamma, \gamma^2)$random variable. We note that 
$Y(t)$ has a similar behavior.
} 

\section{Walks according to a balanced triangular scheme}
\label{sec:balanced-triangular}

In this section, we discuss a walk according to a balanced triangular
scheme.  The case has some historical significance. \polya\ urn models
came about in the first decades of the 20th century. 
Perhaps the  first urn schemes are Ehrenfest model,  constructed to
understand 
the diffusion of gas~\cite{Ehrenfest-Ehrenfest:1907:einwaende-boltzmann-H-theorem}, 
and  \polya-Eggenberger urn scheme, 
a model for contagion~\cite{Eggenberger-Polya:1923:statistik-verketteter-vorgaenge}.
 
Soon thereafter,
a theory was developed for many types of urns.  However, the
triangular flavor remained defiant until very recently.  The
triangular case has been handled in~\cite{Janson2006} and limit
distributions have been characterized. Alternative characterizations
are given in~\cite{PF188, Kuba-Mahmoud:2017:balanced-affine-urns, Zhang2015}.

So, let us consider a walk $\vecX(t) = (X(t), Y(t))^\top$
following a balanced triangular scheme with
the 
navigation matrix
\begin{equation*}
  \matA =
  \begin{pmatrix}
    \alpha & \delta-\alpha\\
    0 & \delta
  \end{pmatrix},
\end{equation*}
where $\alpha < \delta$ are numbers in~$\R^+$. We are
excluding the case $\alpha=\delta$; see Appendix~\ref{sec:polya-eggenberger},
where this \polya--Eggenberger like random walk has already been handled.

\begin{theorem}\label{thm:balanced-triangular}
  Suppose we have a balanced triangular scheme as described above.
  Then, the moment generating function is
  \begin{align*}
    \phi(t;u,v)
    &= \fE{e^{u X(t) + v Y(t)}} \\
    &=e^{-X(0)\, t} (
      e^{-\alpha u} - e^{-\alpha v} + 
      (e^{-\delta v} - 1 + e^{-\delta t})^{\alpha/\delta}
    )^{-X(0)/\alpha}\\
    &\phantom{=}\;
    \times e^{-Y(0)\, t} (e^{-\delta v} - 1 + e^{-\delta t})^{-Y(0)/\delta}.
\end{align*}
\end{theorem}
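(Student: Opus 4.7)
The plan is to apply the method of characteristics to the partial differential equation of Theorem~\ref{Theo:pde} specialized to the present navigation matrix. Since the entries of $\matA$ are deterministic, the row moment generating functions are $\psi_1(u,v)=e^{\alpha u+(\delta-\alpha)v}$ and $\psi_2(u,v)=e^{\delta v}$, and the PDE reads
\begin{equation*}
  \frac{\partial \phi}{\partial t}
  + \bigl(1-e^{\alpha u+(\delta-\alpha)v}\bigr)\frac{\partial \phi}{\partial u}
  + \bigl(1-e^{\delta v}\bigr)\frac{\partial \phi}{\partial v}=0,
\end{equation*}
subject to $\phi(0;u,v)=e^{uX(0)+vY(0)}$. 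The characteristic ODEs $du/dt=1-e^{\alpha u+(\delta-\alpha)v}$, $dv/dt=1-e^{\delta v}$, $d\phi/dt=0$ have a triangular structure: the $v$-equation decouples. I will seek two independent first integrals $I_1(t,v)$ and $I_2(t,u,v)$, write the general solution as $\phi=F(I_1,I_2)$, and then determine $F$ from the initial data.

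The $v$-equation is separable, and a direct differentiation confirms that
\begin{equation*}
  I_1(t,v):=e^{\delta t}\bigl(e^{-\delta v}-1+e^{-\delta t}\bigr)
\end{equation*}
is constant along characteristics with initial value $I_1=e^{-\delta v_0}$. For the coupled $u$-equation, the key step is the observation that the combination $e^{\alpha(t-u)}-e^{\alpha(t-v)}$ is itself a first integral: differentiating along a characteristic and using $1-du/dt=e^{\alpha u+(\delta-\alpha)v}$ together with $1-dv/dt=e^{\delta v}$ collapses both terms to $\alpha e^{\alpha t+(\delta-\alpha)v}$, which cancel. Combining this with the identity $I_1^{\alpha/\delta}=e^{-\alpha v_0}$ yields the second invariant
\begin{equation*}
  I_2(t,u,v):=e^{\alpha t}\Bigl(e^{-\alpha u}-e^{-\alpha v}+\bigl(e^{-\delta v}-1+e^{-\delta t}\bigr)^{\alpha/\delta}\Bigr),
\end{equation*}
which evaluates to $e^{-\alpha u_0}$ along characteristics.

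Since $v_0=-\delta^{-1}\ln I_1$ and $u_0=-\alpha^{-1}\ln I_2$, the initial condition $\phi(0;u_0,v_0)=e^{u_0X(0)+v_0Y(0)}$ forces $F(I_1,I_2)=I_2^{-X(0)/\alpha}\,I_1^{-Y(0)/\delta}$. Substituting the explicit expressions for $I_1$ and $I_2$, and pulling the prefactors $e^{\alpha t}$ and $e^{\delta t}$ inside the exponents $-X(0)/\alpha$ and $-Y(0)/\delta$ respectively, produces the product formula stated in the theorem; as a sanity check, at $t=0$ each factor collapses to the corresponding piece of $e^{uX(0)+vY(0)}$.

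The main obstacle is spotting the ansatz $e^{\alpha(t-u)}-e^{\alpha(t-v)}$ that cleanly decouples the $u$-characteristic. A less clairvoyant route to the same invariant is to substitute $y=e^{-\alpha u}$, which converts the $u$-equation into the linear ODE $dy/dt+\alpha y=\alpha e^{(\delta-\alpha)v(t)}$; solving it via the integrating factor $e^{\alpha t}$ together with the explicit formula for $v(t)$ obtained from $I_1$ recovers $I_2$ after simplification. Once both invariants are in hand, the remaining work is routine algebra.
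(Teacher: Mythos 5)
Your proof is correct, and it arrives at the theorem by the same overall strategy as the paper --- the method of characteristics, with exactly the same two invariants: your $I_1$ is the paper's $y_c+1$ and your $I_2$ is the paper's $x_c+(y_c+1)^{\alpha/\delta}$ from Lemma~\ref{lem:characteristic-curves}. The differences are in how the pieces are obtained, and your route is noticeably leaner on two counts. First, the paper derives the $u$-invariant by feeding the characteristic ODE for $u$ to a computer algebra system, obtaining an expression involving the hypergeometric function ${}_2F_1$, and then simplifying it in the balanced case $\beta=\delta-\alpha$; your direct check that $e^{\alpha(t-u)}-e^{\alpha(t-v)}$ has vanishing derivative along characteristics (both terms differentiate to $\alpha e^{\alpha t+(\delta-\alpha)v}$) replaces that entire computation, and your fallback via the substitution $y=e^{-\alpha u}$, which makes the equation linear, is a legitimate systematic derivation of the same invariant. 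Second, the paper determines the unknown function $\widetilde\phi$ in the general solution~\eqref{eq:balanced-upper:general} using two separate initial conditions --- the slice $u=v$, for which it sets up and solves Kolmogorov's forward equations (Lemmas~\ref{lem:init-pgf-u-is-v} and~\ref{lem:solution-kolmogorov}), and the slice $t=0$ --- whereas you use only the $t=0$ data, which already determines $F$ because $(I_1,I_2)\vert_{t=0}=(e^{-\delta v},e^{-\alpha u})$ is invertible in $(u,v)$. That single condition suffices, the paper's $u=v$ computation then serving as an independent consistency check rather than a logical necessity, and the resulting $F(I_1,I_2)=I_2^{-X(0)/\alpha}I_1^{-Y(0)/\delta}$ reproduces the stated formula exactly. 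The only caveat, shared equally by the paper's own argument, is that matching on the $t=0$ slice pins down $F$ only on the range of $(I_1,I_2)\vert_{t=0}$, so extending the formula to all $(t,u,v)$ implicitly uses analyticity; this is standard for moment-generating-function arguments and is not a gap relative to the paper.
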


From this result we can compute the moments in a straightforward way.
See Section~\ref{sec:balanced-triangular:moments} for details.

To prove Theorem~\ref{thm:balanced-triangular}, we solve the
corresponding partial differential equation of Theorem~\ref{Theo:pde}
by using the method of characteristics. To improve readability, the
proof is split up into several sections.

\subsection{Characteristic curves}

We derive the characteristic curves belonging to the partial
differential equation by establishing the following lemma.

\begin{lemma}\label{lem:characteristic-curves}
  The functions
  \begin{equation*}
    x_c = e^{\alpha t} (e^{-\alpha u} - e^{-\alpha v} )
    \qquad\text{and}\qquad
    y_c = e^{\delta t} (e^{-\delta v} - 1)
  \end{equation*}
  are characteristic curves for the partial differential equation
  that corresponds to the balanced triangular scheme
  specified above.
\end{lemma}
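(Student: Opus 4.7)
The plan is to apply the method of characteristics to the partial differential equation from Theorem~\ref{Theo:pde}, specialized to the navigation matrix $\matA$ of the balanced triangular scheme. For this $\matA$, the row moment generating functions are $\psi_1(u,v) = e^{\alpha u + (\delta-\alpha) v}$ and $\psi_2(u,v) = e^{\delta v}$, so the PDE becomes
$$\phi_t + \bigl(1 - e^{\alpha u + (\delta-\alpha) v}\bigr)\phi_u + \bigl(1 - e^{\delta v}\bigr)\phi_v = 0,$$
and a smooth function $F(t,u,v)$ is a characteristic invariant precisely when it is annihilated by the first-order operator $L := \partial_t + (1-\psi_1)\partial_u + (1-\psi_2)\partial_v$. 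The lemma therefore reduces to showing $L\, x_c = 0$ and $L\, y_c = 0$.

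I would verify $L\, y_c = 0$ first, since $y_c$ is independent of $u$: direct differentiation gives $\partial_t y_c = \delta e^{\delta t}(e^{-\delta v} - 1)$ and $\partial_v y_c = -\delta e^{\delta t - \delta v}$, and the coefficient $(1 - e^{\delta v})$ produces exactly the negative of $\partial_t y_c$. For $L\, x_c$, expanding yields six exponential terms of the form $\pm \alpha e^{\alpha t}(\cdots)$; four of them involve only $e^{-\alpha u}$ or $e^{-\alpha v}$ and cancel in two obvious pairs, while the remaining two are cross terms $\pm \alpha e^{\alpha t + (\delta - \alpha) v}$, arising one from $-\psi_1 \partial_u x_c$ and one from $-\psi_2 \partial_v x_c$, which also cancel.

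To motivate where $x_c$ and $y_c$ come from rather than merely verify them, I would sketch the derivation from Lagrange's characteristic system
$$\frac{du}{dt} = 1 - e^{\alpha u + (\delta-\alpha) v},
\qquad
\frac{dv}{dt} = 1 - e^{\delta v}.$$
The $v$-equation is autonomous; the substitution $w = e^{-\delta v}$ linearizes it to $\dot w = \delta(1 - w)$, whose general solution rearranges to the invariant $y_c$. For the $u$-equation I would observe that $p := e^{-\alpha u}$ and $q := e^{-\alpha v}$ satisfy the \emph{same} inhomogeneous linear ODE $\dot\zeta = -\alpha\zeta + \alpha e^{(\delta-\alpha)v}$ along every characteristic; hence $p - q$ obeys the homogeneous equation $\frac{d}{dt}(p-q) = -\alpha(p-q)$, yielding the invariant $x_c$. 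The main obstacle is precisely spotting this last symmetry: without the observation that $e^{-\alpha u}$ and $e^{-\alpha v}$ obey a common forced linear ODE, the $u$-characteristic equation is not elementarily integrable, and one is forced to guess $x_c$. Once the invariants are in hand, verification is routine exponential bookkeeping.
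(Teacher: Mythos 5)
Your proof is correct, and it takes a genuinely different --- and substantially more elementary --- route than the paper's. You rightly reduce the lemma to showing that the transport operator $L = \partial_t + (1-\psi_1)\partial_u + (1-\psi_2)\partial_v$ annihilates $x_c$ and $y_c$ (a function is constant along the characteristic flow precisely when $L$ kills it, which is what the paper means by ``characteristic curve'' here), and both verifications check out: for $y_c$ the two terms cancel as you say, and for $x_c$ the six exponential terms cancel in the three pairs you identify, the cross terms being $\pm\alpha e^{\alpha t + (\delta-\alpha)v}$. The paper instead derives the invariants constructively: it integrates the $v$-characteristic by separation of variables (giving $y_c$), substitutes the result into the $u$-equation, integrates that with a computer algebra system to obtain a constant of integration involving ${_2F_1}\bigl(\beta/\delta, (\alpha+\beta)/\delta; (\alpha+\beta)/\delta+1; -e^{\delta(t-y_0)}\bigr)$, specializes to the balanced case $\beta=\delta-\alpha$ where the hypergeometric function collapses to an elementary expression, and finally subtracts $e^{\alpha y_0}=y_c^{\alpha/\delta}$ from $\alpha x_0$ to isolate $x_c$. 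Your derivation sketch replaces all of that with the single observation that $p=e^{-\alpha u}$ and $q=e^{-\alpha v}$ obey the same forced linear ODE $\dot\zeta = -\alpha\zeta + \alpha e^{(\delta-\alpha)v}$ along characteristics --- and the coincidence of the two forcing terms is exactly where balancedness enters --- so $p-q$ satisfies the homogeneous equation and $e^{\alpha t}(p-q)$ is a first integral. What the paper's heavier computation buys is partial information about the general (unbalanced) triangular case, up to the point where the hypergeometric function no longer simplifies; what your argument buys is a short, fully verifiable, CAS-free proof of the balanced case, which is all the lemma asserts.
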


To prove this lemma, consider the more general matrix
\begin{equation*}
  \matA =
  \begin{pmatrix}
    \alpha & \beta \\ \gamma & \delta
  \end{pmatrix}.
\end{equation*}

We want to find a solution $\phi(t; u, v)$ of the partial differential equation
\begin{equation*}
  \frac{\partial \phi}{\partial t}
  + (1-e^{\alpha u}e^{\beta v}) \frac{\partial \phi}{\partial u}
  + (1-e^{\gamma u}e^{\delta v}) \frac{\partial \phi}{\partial v}
  = 0.
\end{equation*}

\subsubsection{Parameterizing}

As a first step in the method of characteristics, we introduce a new
parameter~$s$. We set
\begin{subequations}\label{eq:ode}
  \begin{align}\setcounter{equation}{19}
    \label{eq:ode:t}\frac{\dd t}{\dd s} &= 1, \\
    \label{eq:ode:u}\frac{\dd u}{\dd s} &= 1-e^{\alpha u}e^{\beta v},
    \intertext{and}
    \label{eq:ode:v}\frac{\dd v}{\dd s} &= 1-e^{\gamma u}e^{\delta v}.
  \end{align}
\end{subequations}
By using the chain rule and inserting
\eqref{eq:ode}, we obtain
\begin{equation*}
  \frac{\dd}{\dd s} \phi(t(s); u(s), v(s)) = 0,
\end{equation*}
so our function is constant along the characteristics.

From~\eqref{eq:ode:t} it follows that $s = t + t_0$. We choose
$t_0=0$, thus $s=t$, which we use from now on.

\subsubsection{Specialization to the upper triangular case}

Since we are interested in the upper triangular case, we now 
specialize to $\beta=\delta-\alpha$ and $\gamma=0$. 
Thus, \eqref{eq:ode:v} becomes
\begin{equation*}
  \frac{\dd v}{\dd t} = 1-e^{\delta v},
\end{equation*}
and we can easily solve it by the separation of variables. This gives
\begin{equation*}
  v - \frac{1}{\delta} \log(1-e^{\delta v}) = t - y_0,
\end{equation*}
for some (constant) initial condition~$y_0$.
This is equivalent to
\begin{subequations}
\begin{align}
  \label{eq:exp-d-t-yp}
  e^{\delta (t-y_0)} &= \frac{e^{\delta v}}{1-e^{\delta v}} = \frac{1}{e^{-\delta v}-1},\\
  \label{eq:char-exp-d-yp}
  e^{\delta y_0} &= e^{\delta (t-v)} (1-e^{\delta v})
  = e^{\delta t}(e^{-\delta v} - 1),\\
  \intertext{and to}
  \label{eq:exp-d-v}
  e^{\delta v} &= \frac{1}{e^{-\delta (t-y_0)} + 1}
\end{align}
\end{subequations}
as well. In particular, $y_c=e^{\delta y_0}$
(together with Equation~\eqref{eq:char-exp-d-yp}) is the second of our two characteristic
curves of Lemma~\ref{lem:characteristic-curves}.

It remains to derive the first characteristic curve.
Inserting~\eqref{eq:exp-d-v} into~\eqref{eq:ode:u} yields the
differential equation
\begin{equation*}
    \frac{\dd u}{\dd t} =
    1 - \frac{e^{\alpha u}}{\left(e^{-\delta(t-y_0)} + 1\right)^{\beta/\delta}}.
\end{equation*}
With the help of a computational symbolic algebra system, we can 
solve this. We obtain
\begin{equation*}
  u(t) = t - \frac{1}{\alpha} \log\Bigl(
    \alpha x_0 + \frac{\alpha}{\alpha+\beta}
      e^{\alpha t} (e^{\delta (t-y_0)})^{\beta/\delta}
      H \Bigr),
\end{equation*}
where $H$ denotes the hypergeometric function
\begin{equation*}
  H = {_2F_1}\Bigl(\frac{\beta}{\delta},
        \frac{\alpha+\beta}{\delta};
        \frac{\alpha +\beta}{\delta}+1;
        -e^{\delta (t-y_0)} \Bigr),
\end{equation*}
and for some (constant) initial condition~$x_0$.

Solving for this $x_0$ gives
\begin{equation}\label{eq:char-xp}
  x_0 = \frac{1}{\alpha} e^{\alpha(t-u)}
  - \frac{1}{\alpha+\beta}
  e^{\alpha t} (e^{\delta (t-y_0)})^{\beta/\delta} H.
\end{equation}

\subsubsection{Balancing the triangular scheme}

At this point we consider the balanced triangular case and specialize to
$\beta=\delta-\alpha$. The characteristic curve~\eqref{eq:char-xp} is
now
\begin{equation*}
  x_0 = \frac{1}{\alpha} e^{\alpha(t-u)}
  - \frac{1}{\delta}
  e^{\alpha t} \left(-Z\right)^{1-\alpha/\delta} 
  {_2F_1}\big(1-\frac{\alpha}{\delta}, 1; 2; Z\big)
\end{equation*}
with $Z = - e^{\delta(t-y_0)} = 1/(1-e^{-\delta v})$,
cf.~\eqref{eq:exp-d-t-yp}.

We can simplify the hypergeometric function, namely,
\begin{equation*}
{_2F_1}\left(\mu, 1; 2; Z \right)
    = \frac{1 - (1-Z)^{1-\mu}}{(1-\mu)Z}.
  \end{equation*}
%
By using $1-\mu=\alpha/\delta$,
since $(Z-1)/Z=e^{-\delta v}$, we obtain
\begin{equation*}
  x_0 = \frac{1}{\alpha} e^{\alpha(t-u)}
  + \frac{1}{\alpha} e^{\alpha t}
  \frac{1-(1-Z)^{\alpha/\delta}}{(-Z)^{\alpha/\delta}}
  = \frac{e^{\alpha t}}{\alpha}
  (e^{-\alpha u} - e^{-\alpha v}
    + (e^{-\delta v}-1)^{\alpha/\delta}
  ).
\end{equation*}
Setting
\begin{equation*}
  x_c = \alpha x_0 - e^{\alpha y_0}
  = e^{\alpha t} (e^{-\alpha u} - e^{-\alpha v} )
\end{equation*}
(we also used \eqref{eq:char-exp-d-yp}) completes the proof of
Lemma~\ref{lem:characteristic-curves}.

\subsection{The general solution}

As we have both characteristic curves $x_c$ and $y_c$ now (cf.\
Lemma~\ref{lem:characteristic-curves}), we can write down the general
solution to our partial differential equation.  For some
function~$\widetilde\phi$, we have
\begin{equation}\label{eq:balanced-upper:general}
  \phi(t; u, v) = \widetilde\phi(
    e^{\alpha t} (e^{-\alpha u} - e^{-\alpha v} ),
    e^{\delta t} (e^{-\delta v} - 1)
  ).
\end{equation}
To determine $\widetilde\phi$, we need to take
the initial conditions into account.
\subsection{Initial conditions}

We have two possible cases to consider
for the initial conditions of the
solution~(\ref{eq:balanced-upper:general}). The case covered in this
section is $u=v$. Regardless of the type of transition we have, we are
adding $\delta$ balls to the urn, each time a transition takes place.
In the other case (see Section~\ref{sec:initial-t-0}), we set $u=0$, so
that we are only considering transitions with the second
direction of navigation.

Let $\tau(0) :=X(0)+Y(0)$.
The time
between the $\ell$th and $(\ell+1)$st transition is exponential, with
parameter $\tau(0)  + \ell\delta$, i.e., with expected time
$1/(\tau(0) +\ell\delta)$.  When the $(\ell+1)$st transition occurs (for any
$\tau(0) \geq 0$), we walk a total of $\delta$ units in the sense of Manhattan taxicab geometry.  So,
the walk is $\tau(0) +\ell\delta$ Manhattan blocks away from the origin,
after $\ell$
transitions have taken place.

\begin{lemma}\label{lem:init-pgf-u-is-v}
  If $u=v$, then the probability generating function of $Y(t)$ is
  \begin{equation*}
    \fE{e^{v Y(t)}}
    = (1 - e^{\delta t} + e^{\delta(t-v)}
    )^{-(X(0)+Y(0))/\delta}.
  \end{equation*}
\end{lemma}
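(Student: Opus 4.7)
The plan is to exploit the balance of the scheme: both rows of $\matA$ sum to $\delta$, so every transition increases the total $S(t) := X(t) + Y(t)$ by exactly $\delta$. Writing $i := X(0) + Y(0)$ and letting $N(t)$ denote the number of transitions in $[0,t]$, this yields $S(t) = i + \delta N(t)$. Setting $u = v$ in the joint moment generating function therefore collapses it to a generating function in $S(t)$ alone:
\[
\phi(t; v, v) = \fE{e^{v S(t)}} = e^{v i}\, \fE{e^{v \delta N(t)}}.
\]
The discussion preceding the lemma already identifies $N(t)$ as a pure-birth Markov process jumping from state~$\ell$ to state~$\ell+1$ at rate $i + \ell \delta$.

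From here I would take the analytic route, because substituting $u = v =: w$ into the master PDE of Theorem~\ref{Theo:pde} is very clean: the balance condition gives $\psi_1(w,w) = e^{\alpha w + (\delta-\alpha)w} = e^{\delta w}$ and $\psi_2(w,w) = e^{\delta w}$, so the two transport terms coincide. Writing $\eta(t,w) := \phi(t; w, w)$ and noting that $\partial_w \eta = (\partial_u \phi + \partial_v \phi)\big|_{u=v=w}$, the PDE collapses to the single first-order equation
\[
\frac{\partial \eta}{\partial t} + (1 - e^{\delta w})\,\frac{\partial \eta}{\partial w} = 0.
\]
The associated characteristic ODE $\dd w/\dd t = 1 - e^{\delta w}$ separates and integrates explicitly; a short calculation produces the invariant $1 - e^{\delta t} + e^{\delta(t-w)}$. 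Hence the general solution has the form $\eta(t,w) = f\bigl(1 - e^{\delta t} + e^{\delta(t-w)}\bigr)$ for some univariate function $f$.

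The initial condition pins down $f$: at $t = 0$ the invariant reduces to $e^{-\delta w}$, and $\eta(0,w) = \fE{e^{w S(0)}} = e^{i w}$. Solving $f(e^{-\delta w}) = e^{i w}$ forces $f(z) = z^{-i/\delta}$, which yields the claimed formula.

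As a sanity check one can also proceed purely probabilistically: since $N(t)$ is a linear pure-birth process, its probability generating function $G(t,z) = \fE{z^{N(t)}}$ satisfies the forward equation $\partial_t G = (z-1)(i G + \delta z\, \partial_z G)$ with $G(0,z) = 1$, and solving this (again by characteristics) gives the same answer after substituting $z = e^{\delta v}$. I do not anticipate a serious obstacle here: the entire argument hinges solely on the row-sum balance condition on $\matA$, and the only nontrivial calculation is the separation of variables in $\dd w/\dd t = 1 - e^{\delta w}$, a standard exercise.
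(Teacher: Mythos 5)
Your proof is correct, but it takes a genuinely different route from the paper's. The paper proves this lemma purely probabilistically: it observes that under the balance condition the total $X(t)+Y(t)$ evolves as a linear pure-birth chain with rates $i+\ell\delta$, writes down Kolmogorov's forward equations for the transition probabilities $P_{i,i+\ell\delta}(t)$, verifies by direct differentiation the explicit solution $P_{i,i+\ell\delta}(t)=\frac{(i/\delta)^{\overline{\ell}}}{\ell!}e^{-it}(1-e^{-\delta t})^{\ell}$ (Lemma~\ref{lem:solution-kolmogorov}), and then sums the resulting series via the binomial identity $(1-Z)^{-\mu}=\sum_{\ell\ge0}\mu^{\overline{\ell}}Z^{\ell}/\ell!$. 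You instead restrict the master PDE of Theorem~\ref{Theo:pde} to the diagonal $u=v=w$, where the balance condition makes $\psi_1=\psi_2=e^{\delta w}$ so that the equation collapses to the one-dimensional transport equation $\eta_t+(1-e^{\delta w})\eta_w=0$, and you solve it by characteristics with the $t=0$ data $\eta(0,w)=e^{iw}$. Both arguments are sound, and yours is not circular: the diagonal restriction is a well-posed one-dimensional Cauchy problem whose characteristics emanating from $t=0$ cover the relevant region, so it determines $\phi$ on $u=v$ independently of the two-dimensional boundary-data bookkeeping in Sections 6.3--6.5. Your route is shorter and reuses machinery already built (the PDE and the method of characteristics, with your invariant $1-e^{\delta t}+e^{\delta(t-w)}$ being exactly $1+y_c$ from Lemma~\ref{lem:characteristic-curves}); the paper's route costs an extra guess-and-verify computation but delivers strictly more information, namely the explicit (negative-binomial-type) distribution of the number of transitions, which the authors also use to confirm that these probabilities sum to one. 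One presentational note: despite the wording of the statement, the quantity computed is the moment generating function of the total $X(t)+Y(t)$ (as the exponent $-(X(0)+Y(0))/\delta$ and the paper's own proof make clear), and your identification of it as $\fE{e^{vS(t)}}$ with $S(t)=i+\delta N(t)$ is the correct reading.
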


Note that
\begin{equation*}
  \fE{e^{v Y(t)}} = \left(1 + y_c\right)^{-(X(0)+Y(0))/\delta},
\end{equation*}
where $y_c$ is the characteristic curve of
Lemma~\ref{lem:characteristic-curves}.

Lemma~\ref{lem:init-pgf-u-is-v} can be proved by setting up and
solving Kolmogorov's Forward Equations; see
Appendix~\ref{sec:details-proof-balanced} for details.

\def\movekolmogorov{
As mentioned in Section~\ref{sec:balanced-triangular}, we will setup
and solve Kolmogorov's Forward Equations to prove
Lemma~\ref{lem:init-pgf-u-is-v}.

\subsubsection{Kolmogorov's forward equations}

Let $P_{i,j}(t)$ denote the probability that, starting with $i$ balls
in the urn at a certain time, then $t$ time units later, we have $j$
balls in the urn.
In particular, $P_{i,i+\ell\delta}(t)$ is the probability that,
starting with $i$ balls, we have
exactly $\ell$ transitions during the next $t$ time units.
We follow some of the notation of Ross~\cite{Ross:1996:stochastic-proc}.
We let
$v_{i}$ denote the rate for the exponential distribution of time until
the next transition occurs, when there are currently $i$ balls in the
urn.  In our case, since the balls act independently, and each ball
has exponential rate 1 of being chosen, we have $v_{i} = i$.
We define $q_{i,j} = v_{i}P_{i,j}$, so $q_{i,i+\delta} = v_{i} = i$,
and $q_{i,j} = 0$ otherwise.

We have, as in Ross's Lemma 5.4.1,
\begin{equation*}
\lim_{t\rightarrow 0}\frac{1-P_{i,i}(t)}{t} = v_{i}
\qquad\text{and}\qquad
\lim_{t\rightarrow 0}\frac{P_{i,j}(t)}{t} = q_{i,j},\qquad\text{for $i\neq j$}.
\end{equation*}
So, now we
set up Kolmogorov's Forward Equations, following
Theorem~5.4.4 of Ross.
In our case, these equations are
\begin{subequations}
  \label{eq:kolmogorov-forward}
  \begin{equation}
    P_{i,i}^{\prime}(t) = - v_{i}P_{i,i}(t) = - i P_{i,i}(t),
  \end{equation}
  and for $\ell \geq 1$, 
  \begin{equation}
    P_{i,i+\ell\delta}^{\prime}(t) = (i+(\ell-1)\delta)P_{i,i+(\ell-1)\delta}(t)
    - (i+\ell\delta)P_{i,i+\ell\delta}(t).
  \end{equation}
\end{subequations}

We use the rising factorial notation
$(i/\delta)^{\overline{\ell}} := \prod_{k=0}^{\ell-1}(i/\delta+k)$
in the statement of the lemma.
The initial conditions on the $P_{i,i+\ell\delta}(t)$ are
$P_{i,i}(0)=1$ and $P_{i,i+\ell\delta}(0)=0$ for $\ell\geq1$.

\begin{lemma}\label{lem:solution-kolmogorov}
  The functions
  \begin{equation*}
    P_{i,i+\ell\delta}(t) =
    \frac{(i/\delta)^{\overline{\ell}}}{\ell!}
    e^{-it}(1 - e^{-\delta t})^{\ell}
  \end{equation*}
  are the solutions to the Kolmogorov system of differential
  equations~\eqref{eq:kolmogorov-forward}.
\end{lemma}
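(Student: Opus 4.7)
Since we already possess the closed-form candidate, the natural plan is to verify it directly by induction on $\ell$, plugging it into the system~\eqref{eq:kolmogorov-forward}. No method of characteristics or generating-function machinery is required; the work reduces to one identity for the rising factorial plus a routine manipulation of $1-e^{-\delta t}$ and its derivative. An alternative would be to sum the equations against $z^\ell$, derive the first-order linear PDE $\partial_t G - \delta z(z-1)\partial_z G = i(z-1)G$ for the probability-generating function, and solve it to get $G(t,z)=e^{-it}\bigl(1-z(1-e^{-\delta t})\bigr)^{-i/\delta}$, from which the stated $P_{i,i+\ell\delta}(t)$ follows by reading off coefficients; this is clean but strictly more machinery than the induction requires.

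First I would dispose of the base case $\ell=0$. The proposed formula reduces to $P_{i,i}(t)=e^{-it}$, whose derivative is $-ie^{-it}=-iP_{i,i}(t)$, matching the first equation of~\eqref{eq:kolmogorov-forward}, and $P_{i,i}(0)=1$. The initial conditions for $\ell\geq 1$ are also immediate, since each candidate carries the factor $(1-e^{-\delta t})^{\ell}$, which vanishes at $t=0$.

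For the inductive step, assume the formula holds at index $\ell-1$, and differentiate the proposed $P_{i,i+\ell\delta}(t)$ in $t$. Applying the product rule to $e^{-it}(1-e^{-\delta t})^{\ell}$ yields two terms; factoring the common $e^{-it}(1-e^{-\delta t})^{\ell-1}$ out front leaves the bracket
\begin{equation*}
  -i\bigl(1-e^{-\delta t}\bigr) + \ell\delta\, e^{-\delta t}
  = -i + (i+\ell\delta)\,e^{-\delta t}.
\end{equation*}
On the right-hand side of the Kolmogorov equation at index $\ell$, I would substitute the inductive formula for $P_{i,i+(\ell-1)\delta}(t)$ and the candidate for $P_{i,i+\ell\delta}(t)$, then factor the same $e^{-it}(1-e^{-\delta t})^{\ell-1}$ out front. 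The identity $(i/\delta)^{\overline{\ell-1}}\,(i/\delta+\ell-1)=(i/\delta)^{\overline{\ell}}$ collapses the two rising factorials into the common prefactor $(i/\delta)^{\overline{\ell}}/\ell!$, and the residual scalar simplifies to $\ell\delta - (i+\ell\delta)(1-e^{-\delta t}) = -i + (i+\ell\delta)\,e^{-\delta t}$, matching the derivative computed above.

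Nothing in this argument is genuinely difficult; the one place one could stumble is the bookkeeping of the rising factorials and the juggling of $\delta$, $\ell$, and $\ell!$. I would therefore write this accounting out explicitly, because otherwise the cancellation of the coefficient $\ell\delta$ from the inductive term against $(i+\ell\delta)$ from the subtractive term looks like a coincidence rather than a consequence of the rising-factorial identity.
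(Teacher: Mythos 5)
Your proposal is correct and matches the paper's own proof: both verify the candidate formula by differentiating $P_{i,i+\ell\delta}(t)$ and checking, via the rising-factorial identity $(i+(\ell-1)\delta)\,(i/\delta)^{\overline{\ell-1}}/(\ell-1)! = \ell\delta\,(i/\delta)^{\overline{\ell}}/\ell!$, that the result equals $(i+(\ell-1)\delta)P_{i,i+(\ell-1)\delta}(t) - (i+\ell\delta)P_{i,i+\ell\delta}(t)$, together with the base case and initial conditions. Your ``induction'' framing is only cosmetic, since both $P_{i,i+\ell\delta}$ and $P_{i,i+(\ell-1)\delta}$ are given in closed form and the check is a direct substitution, exactly as in the paper.
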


Before proving Lemma~\ref{lem:solution-kolmogorov}, we first note this indeed is a probability
distribution, since $\sum_{\ell \geq 0} P_{i,i+\ell\delta}(t) = 1$ (follows from
Lemma~\ref{lem:init-pgf-u-is-v}) and since all these summands are
nonnegative.

In particular, we have the solutions
\begin{equation*}
  P_{i,i}(t) = e^{-it}
\qquad\text{and}\qquad
  P_{i,i+\delta}(t) = \frac{i}{\delta}e^{-it}(1-e^{-\delta t}).
\end{equation*}
\begin{proof}[Proof of Lemma~\ref{lem:solution-kolmogorov}]
  Inserting $t=0$ shows that the initial conditions are satisfied.  We
  used the conventions that $0^0=1$, and that the empty product equals $1$.
  The case $\ell=0$ follows by a direct calculation.

  When $\ell\geq1$ we have
  \begin{align*}
    P_{i,i+\ell\delta}^{\prime}(t)
    &= \frac{\prod_{k=0}^{\ell-1}(i+k\delta)}{\ell!\thinspace\delta^{\ell}}
    (- i e^{-it}(1 - e^{-\delta t})^{\ell}
      +\ell\delta e^{-it} e^{-\delta t} (1 - e^{-\delta t})^{\ell-1}
    ) \\
    &=- i
    \frac{\prod_{k=0}^{\ell-1}(i+k\delta)}{\ell!\thinspace\delta^{\ell}}
    e^{-it}(1 - e^{-\delta t})^{\ell} \\
    &\phantom{=}\hphantom{0}- \ell\delta 
    \frac{\prod_{k=0}^{\ell-1}(i+k\delta)}{\ell!\thinspace\delta^{\ell}}
    e^{-it} (1 - e^{-\delta t}) (1 - e^{-\delta t})^{\ell-1} \\
    &\phantom{=}\hphantom{0}+
    (i+(\ell-1)\delta)
    \frac{\prod_{k=0}^{\ell-1}(i+k\delta)}{(\ell-1)!\thinspace\delta^{\ell-1}}
    e^{-it} e^{-\delta t} (1 - e^{-\delta t})^{\ell-1} \\
    &= - (i+\ell\delta)P_{i,i+\ell\delta}(t)
    + (i+(\ell-1)\delta)P_{i,i+(\ell-1)\delta}(t),
  \end{align*}
  which proves the lemma.
\end{proof}

\subsubsection{Probability generating function}

By using the solutions to Kolmogorov's Forward Equations (Lemma~\ref{lem:solution-kolmogorov}), it is not hard anymore to derive the moment generating function~$\fE[empty]{e^{v Y(t)}}$.

\begin{proof}[Proof of Lemma~\ref{lem:init-pgf-u-is-v}]
  To calculate $\fE[empty]{e^{v Y(t)}}$, we insert $i=X(0)+Y(0)$ at the end of this
  proof; but for the moment we still write the~$i$.
  Taking Lemma~\ref{lem:solution-kolmogorov}
  and summing yields
  \begin{equation*}
    \fE{e^{v Y(t)}} = \sum_{\ell \geq 0} P_{i,i+\ell\delta}(t) e^{u(i+\delta \ell)}
    = e^{i(v-t)} \sum_{\ell \geq 0} \frac{(i/\delta)^{\overline{\ell}}}{\ell!}
    (e^{\delta v}(1 - e^{-\delta t}))^{\ell}.
  \end{equation*}
  Since $(1-Z)^{-\mu} = \sum_{\ell\geq0} \mu^{\overline{\ell}} Z^\ell\! /
  \ell!$ (again using the rising factorial notation
  $\mu^{\overline{\ell}}:=\mu(\mu+1)\dots(\mu+\ell-1)$), we obtain
  \begin{equation*}
    \fE{e^{v Y(t)}}
    = e^{-i(t-v)} (1 - e^{\delta v}(1 - e^{-\delta t})
    )^{-i/\delta},
  \end{equation*}
  and the result follows by rearranging the terms.
\end{proof}

\subsection{Second initial condition}
\label{sec:initial-t-0}

For this initial condition, we consider the situation at time
$t=0$. Again, we have $X(0)$ units of the first type and $Y(0)$
units of the second type. This translates to the probability
generating function
\begin{align*}
  \fE{e^{u X(0) + v Y(0)}} 
  &= e^{u X(0)} e^{v Y(0)} \\
  &= \left(x_c\vert_{t=0} + 
    \left(y_c\vert_{t=0} + 1\right)^{\alpha/\delta}
  \right)^{-X(0)/\alpha}
  \left(y_c\vert_{t=0} + 1\right)^{-Y(0)/\delta},
\end{align*}
which we have by rewriting in terms of the characteristic curves $x_c$
and $y_c$ of Lemma~\ref{lem:characteristic-curves}.
} 

\subsection{Solution to the partial differential equation}

We are now ready to determine the function~$\widetilde\phi$ of the
general solution~\eqref{eq:balanced-upper:general}, and thus, we will prove
Theorem~\ref{thm:balanced-triangular}.

\begin{proof}[Proof of Theorem~\ref{thm:balanced-triangular}]
  We use the initial conditions provided by
  Lemma~\ref{lem:init-pgf-u-is-v}.
We compute
\begin{align*}
  \phi(t; u,v)
  &= \fE{e^{u X(t) + v Y(t)}} \\
  &= (x_c + (y_c + 1)^{\alpha/\delta})^{-X(0)/\alpha}
  (y_c + 1)^{-Y(0)/\delta} \\
  &= (e^{\alpha t}
    (e^{-\alpha u} - e^{-\alpha v} ) + 
    (e^{\delta t} (e^{-\delta v} - 1) + 1)^{\alpha/\delta}
  )^{-X(0)/\alpha} \\
  & \qquad\qquad \times (
    e^{\delta t} (e^{-\delta v} - 1) + 1
  )^{-Y(0)/\delta} \\
  &=e^{-X(0) t} (
    e^{-\alpha u} - e^{-\alpha v} + 
    (e^{-\delta v} - 1 + e^{-\delta t})^{\alpha/\delta}
  )^{-X(0)/\alpha} \\
  &\qquad\qquad \times e^{-Y(0) t} (e^{-\delta v} - 1 + e^{-\delta t})^{-Y(0)/\delta}.
\end{align*}
This solution 
satisfies our initial conditions, and our proof is complete.
\end{proof}
\subsection{Moments}
\label{sec:balanced-triangular:moments}

Theorem~\ref{thm:balanced-triangular} provides us with the moment
generating function. The moments now follow by differentiation and
evaluating at $u=0$ and $v=0$. The first moments are
\begin{align*}
  \fE{X(t)} &=
  X(0) e^{\alpha t},\\
  \fE{Y(t)} &=
  (X(0)+Y(0)) e^{\delta t}
  - X(0) e^{\alpha t}.
\end{align*}
Note that the two coordinates of the walk grow at different rates, 
and the drift is much stronger in the vertical direction. 

We can also compute the second moments,
\begin{align*}
  \fE[empty]{X^2(t)} &=
  X(0)(\alpha+X(0)) e^{2\alpha t}
  - \alpha X(0) e^{\alpha t},\\
  \fE{X(t)Y(t)} &=
  X(0)(\alpha+X(0)+Y(0)) e^{(\alpha+\delta) t}
  - X(0)(\alpha+X(0)) e^{2\alpha t},\\
  \fE[empty]{Y^2(t)} &=
  (X(0)+Y(0))(\delta+X(0)+Y(0)) e^{2\delta t}\\
  &\phantom{=}\hphantom{0}{}
  - 2 X(0)(\alpha+X(0)+Y(0)) e^{(\alpha+\delta) t}\\
  &\phantom{=}\hphantom{0}{}
  {}+ \delta(X(0)+Y(0)) e^{\delta t}
  {}+ X(0)(\alpha+X(0)) e^{2\alpha t}
  {}+ \alpha X(0) e^{\alpha t},
  \intertext{and consequently}
  \fV{X(t)} &=
  \alpha X(0) e^{2\alpha t}
  - \alpha X(0) e^{\alpha t},\\
  \fCov{X(t),Y(t)} &=
  \alpha X(0) e^{(\alpha+\delta) t}
  - \alpha X(0) e^{2\alpha t},\\
  \fV{Y(t)} &=
  \delta (X(0)+Y(0)) e^{2\delta t}
  - 2 \alpha X(0) e^{(\alpha+\delta) t}\\
  &\phantom{=}\hphantom{0}{}
  - \delta(X(0)+Y(0)) e^{\delta t}
  + \alpha X(0) e^{2\alpha t}
  + \alpha X(0) e^{\alpha t}.
\end{align*}

\begin{remark}\label{remark:first-moment}
  Of course, the first moments follow by
  Theorem~\ref{Theo:polyaprocave} as well: We have
  \begin{equation*}
    \fE{\vecX(t)} = e^{
      {\scriptsize\begin{pmatrix}
          \alpha  & 0\\ \delta-\alpha  & \delta
        \end{pmatrix}} t}\,  \vecX (0).
  \end{equation*}
  The matrix in the exponent is diagonal, which makes it an easy
  computation and we obtain
  \begin{equation*}
    \begin{pmatrix} \fE{X(t)} \\ \fE{Y(t)} \end{pmatrix}
    = 
    \begin{pmatrix}
      e^{\alpha t}  & 0\\
      e^{\delta t}-e^{\alpha t}  & e^{\delta t}
    \end{pmatrix}
    \begin{pmatrix} X(0) \\ Y(0) \end{pmatrix},
  \end{equation*}
as derived by differentiation.  
\end{remark}
\subsection{Asymptotic distributions}
With the exact moment generating function at hand, we can determine marginal 
as well as joint distributions of the number of (suitably scaled) white and blue balls. 

Let us start with the displacement in the first coordinate. Set $v=0$, 
and evaluate~$X(t)$ with the scale
$e^{\alpha t}$. We see that
\begin{equation*}
\E\Bigl[\exp\Bigl(\frac {X(t)} {e^{\alpha t}} u\Bigr)\Bigr] 
      = \phi \Bigl(t; \frac u {e^{\alpha t}}, 0 \Bigr) = 
         \frac {e^{-\tau t} e^{bt}} {(e^{-\alpha u e^{-\alpha t}} -1 + e^{-\alpha t})
             ^{X(0) / \alpha}}.  
\end{equation*}
As $t \to \infty$, we find
\begin{align*}
\E\Bigl[\exp\Bigl(\frac {X(t)} {e^{\alpha t}} u\Bigr)\Bigr] 
      &= 
         \frac {e^{-\tau t} e^{bt}} {((1 - \alpha u e^{-\alpha t} + O (e^{-2 \alpha t})) -1 + e^{-\alpha t})
             ^{X(0) / \alpha}} \\
             &\sim \frac {e^{-\tau t} e^{Y(0)t}} {(e^{-\alpha t}- \alpha u e^{-\alpha t} )
             ^{X(0) / \alpha}}  \\
             &\sim \frac {e^{-\tau t} e^{Y(0)t}} {e^{-X(0)t}(1- \alpha u)
             ^{X(0) / \alpha}},
\end{align*} 
where ``$\sim$'' stands for asymptotic equivalence.
  Recalling that $\tau(0) = X(0) + Y(0)$, we see a cancellation leading
  to the convergence
  \begin{equation*}
    \E\Bigl[\exp\Bigl(\frac {X(t)} {e^{\alpha t}} u\Bigr)\Bigr] 
     \to \frac 1 {(1- \alpha u)
             ^{X(0) / \alpha}}.
  \end{equation*}
  The right-hand side is the moment generating function of a $\Gam(X(0)/\alpha, \alpha)$ random variable.
  By L\'evy's Continuity Theorem~\cite{Williams:1991:probability-martingales} (Theorem 18.1), we have
  \begin{equation*}
    \frac {X(t)} {e^{\alpha t}} \to \Gam\Bigl(\frac {X(0)}
    \alpha ,\alpha\Bigr).
  \end{equation*}
  By a similar analysis, putting $u= 0$ and using the scale $e^{\delta t}$ for
  the displacement along the second axis, we find 
  \begin{equation*}
    \frac {Y(t)} {e^{\alpha t}} \to \Gam\Bigl(\frac {\tau(0)}
    \delta,\delta \Bigr).
  \end{equation*}
Joint distributions with the two marginals being gamma distributions are called 
bivariate gamma distributions. From the variances and covariance computed in 
Subsection~\ref{sec:balanced-triangular:moments}, we find the asymptotic correlation between $X(t) e^{-\alpha t}$ and  $Y(t) e^{-\delta t}$ to be
$\sqrt { (\alpha X(0))/(\delta \tau(0))}$, for $\alpha < \delta$.
This extends a calculation in~\cite{ Chen-Mahmoud:2016:time-continuous-polya}, under the scenario of 
a triangular \polya\ urn scheme.
For the case $\alpha = \delta$, we have 0~correlation, as naturally arises
from the independence of the movement along the two axes. 

\def\movesecondmoment{
  Pursuing a similar approach to the one we used to derive the mean in
  Theorem~\ref{Theo:polyaprocave}, we can try to go forward with the
  second moment. We only highlight the salient steps. Take the
  partial derivatives $\partial^2\! / \partial u^2$, $\partial^2\!
  / \partial u\!\ \partial v$, and $\partial^2\! / \partial v^2$ of the
  moment generating function of Theorem~\ref{thm:balanced-triangular},
  and evaluate each equation at $u=0$ and $v=0$. We obtain the system of
  ordinary differential equations
  \begin{align*}
    \frac d {dt} \fE[empty]{X^2(t)}
    &= 2\alpha \fE[empty]{X^2(t)} + \alpha^2\fE{X(t)}, \\
    \frac d {dt} \fE{X(t)Y(t)}
    &= (\alpha+\delta)\fE{X(t)Y(t)} + \beta\fE[empty]{X^2(t)}
    + \alpha\beta \fE{X(t)} , \\
    \frac d {dt} \fE[empty]{Y^2(t)}
    &=  2\delta\fE[empty]{Y^2(t)} + 2\beta \fE{X(t)Y(t)}
    + \beta^2\fE[empty]{X(t)} + \delta^2 \fE{Y(t)},
  \end{align*}
  with $\beta=\delta-\alpha$.
  This system is to be solved under the initial conditions $\fE[empty]{X^2(0)}
  = X(0)^2$, $\fE{X(0)Y(0)} = X(0)Y(0)$, and $\fE[empty]{Y^2(0)} = Y(0)^2$.

  We can solve it sequentially, starting with $\fE{X^2(t)}$, as its
  differential equation is self contained. We can then plug in the
  solution of $\fE[empty]{X^2(t)}$ into the differential equation for
  $\fE{X(t)Y(t)}$, which at this point would have only known components
  on the right-hand side.  Finally, we plug in all the known functions
  of averages and mixed moments in the differential equation for
  $\fE[empty]{Y^2(t)}$. 
} 

\newcommand{\MR}[1]{}
{\footnotesize
\bibliographystyle{amsplain}
\bibliography{continuumPolya}}

\vfill
\bigskip
{\footnotesize
\noindent\begin{minipage}{1.0\linewidth}
\noindent
Daniel Krenn\\
Department of Mathematics\\
Alpen-Adria-Universität Klagenfurt\\
Universitätsstraße 65--67\\
9020 Klagenfurt am Wörthersee, Austria\\
\href{mailto:math@danielkrenn.at}{\tt math@danielkrenn.at} or
\href{mailto:daniel.krenn@aau.at}{\tt daniel.krenn@aau.at}\\
\end{minipage}

\noindent\begin{minipage}{1.0\linewidth}
\noindent
Hosam~M.\@ Mahmoud\\
Department of Statistics\\
The George Washington University\\
Washington, D.C.~20052, USA\\
\href{mailto:hosam@gwu.edu}{\tt hosam@gwu.edu}\\
\end{minipage}

\noindent\begin{minipage}{1.0\linewidth}
\noindent
Mark Daniel Ward\\
Department of Statistics\\
Purdue University\\
West Lafayette, IN 47907, USA\\
\href{mailto:mdw@purdue.edu}{\tt mdw@purdue.edu}\\
\end{minipage}

}

\clearpage
\appendix

\section{Illustrative examples}
\label{sec:examples}
We give here some examples. Some of them resemble and extend
standard \polya\ processes to the random walk counterpart. Some have
no solved equivalent in the \polya\ world (neither the discrete- or continuous-time
versions).

\moveexamples

\section{Left-out details of Section~\ref{sec:balanced-triangular}}
\label{sec:details-proof-balanced}
\subsection{First initial condition}

\movekolmogorov

\section{Alternative approach for the second moment of balanced
  triangular schemes}
\label{sec:alt-second-moment}

In Remark~\ref{remark:first-moment} we state an alternative approach
for obtaining the first moment of the balanced triangular scheme of
Section~\ref{sec:balanced-triangular}.

\movesecondmoment

\end{document}